\providecommand{\U}[1]{\protect\rule{.1in}{.1in}}
\newtheorem{theorem}{Theorem}
\newtheorem{defi}[theorem]{Definition}
\newtheorem{example}[theorem]{Example}
\newtheorem{idea memo}[theorem]{Idea Memo}
\newtheorem{notation}[theorem]{Notation}
\newtheorem{remark}[theorem]{Remark}
\mathchardef\mhyphen="2D
\title{Category Algebras and States on Categories}
\author{Hayato Saigo\footnote{h\_saigoh@nagahama-i-bio.ac.jp}\\
Nagahama Institute of Bio-Science and Technology}
\begin{document}

\maketitle



\begin{abstract}
The purpose of this paper is to build a new bridge between category theory and a generalized probability theory known as noncommutative probability or quantum probability, which was originated as a mathematical framework for quantum theory, in terms of states as linear functionals defined on category algebras. We clarify that category algebras can be considered as generalized matrix algebras and that states on categories as linear functionals defined on category algebras turn out to be generalized of probability measures on sets as discrete categories. Moreover, by establishing a generalization of famous GNS (Gelfand-Naimark-Segal) construction, we obtain representations of category algebras of $^{\dagger}$-categories on certain generalized Hilbert spaces which we call semi-Hilbert modules over rigs.
\end{abstract}

\section{Introduction}

In the present paper we study category algebras and states defined on arbitrary small categories to build a new bridge between category theory (see \cite{EM,MAC,AWO,LEI2} and references therein, for example) and noncommutative probability or quantum probability (see \cite{ACC,HO,BGV} and references therein, for example), a generalized probability theory which was originated as a mathematical framework for quantum theory.

A category algebra is, in short, a convolution algebra of functions on a category. For example, on certain categories called 
finely finite category\cite{LEI}, which is a categorical generalization of locally finite poset, the convolution operation can be defined on the set of arbitrary functions and it becomes a unital algebra called incidence algebra. Many authors have studied the notions of M\"{o}bius inversion, which has been one of fundamental part of combinatorics since the pioneering work by Rota  \cite{ROT} on posets, in the context of incidence algebras on categories (\cite{CF,LER,CLL,HAI,LM,LEI}, for example).

There is another approach to obtain the notion of category algebra. As is well known, a group algebra is defined as a convolution algebra consisting of finite linear combinations of elements.
By generalization with replacing "elements" by "arrows", one can obtain another notion of category algebra (see \cite{HAI}, for example), which also includes monoid algebra (in particular polynomial algebras) and groupoid algebras as examples. Note that for a category with infinite number of objects, the algebra is not unital.

The category algebras we focus on in the present paper are unital algebras defined on arbitrary small categories, which are slightly generalized versions of algebras studied under the name of the ring of an additive category \cite{MIT}.
These category algebras include the ones studied in \cite{HAI} as subalgebras in general, and they coincide for categories with finite number of objects. Moreover, one of the algebras we study, called "backward finite category algebra", coincides with incidence algebras for combinatorically important cases originally studied in \cite{ROT}. 

The purpose of this paper is to provide a new framework for the  interplay between regions of mathematical sciences such as algebra, probability and physics, in terms of states as linear functional defined on category algebras. As is well known, quantum theory can be considered as a noncommutative generalization of probability theory. At the beginning of quantum theory, matrix algebras played a crucial role (see \cite{BOR} for example). In the present paper we clarify that category algebras can be considered as generalized matrix algebras and that the notions of states on categories as linear functionals defined on category algebras turns out to be a conceptual generalization of probability measures on sets as  discrete categories (For the case of states on groupoid algebras over the complex field $\mathbb{C}$ it is already studied \cite{CIM}). 

Moreover, by establishing a generalization of famous GNS (Gelfand-Naimark-Segal) construction \cite{GN,SEG} (as for the studies in category theoretic context, see \cite{JAC, PAR, YAU} for example), we obtain a  representation of category algebras of $^{\dagger}$-categories on certain generalized Hilbert spaces (semi-Hilbert modules over rigs), which can be considered as an extension of the result in \cite{CIM} for groupoid algebras over $\mathbb{C}$. This construction will provide a basis for the interplay between category theory, noncommutaive probability and other related regions such as operator algebras or quantum physics. 

\begin{notation}
In the present paper, categories are always supposed to be small\footnote{This assumption may be relaxed by applying some appropriate foundational framework.}. The set of all arrows in a category $\mathcal{C}$ is also denoted as $\mathcal{C}$. $|\mathcal{C}|$ denotes the set of all objects, which are identified with corresponding identity arrows, in $\mathcal{C}$. We also use the following notations:
\[
^{C'}{\mathcal{C}}_{C}:=\mathcal{C}(C,C'),\;
\mathcal{C}_{C}:=\sqcup_{C'\in |\mathcal{C}|}\mathcal{C}(C,C'),\;
^{C'}{\mathcal{C}}:=\sqcup_{C\in |\mathcal{C}|}\mathcal{C}(C,C'),
\]
where $\mathcal{C}(C,C')$ denotes the set of all arrows from $C$ to $C'$.
\end{notation}

\section{Category Algebras}

We introduce the notion of rig, module over rig, and algebra over rig in order to study category algebras in sufficient generality for various future applications in noncommutative probability, quantum physics and other regions of mathematical sciences such as tropical mathematics.

\begin{defi}[Rig]
A rig $R$ is a set with two binary operations called addition and multiplication such that

\begin{enumerate}
    \item $R$ is a commutative monoid with respect to  addition with the unit $0$,
    \item $R$ is a monoid with respect to multiplication with the unit $1$,
    \item $r''(r'+r)=r''r'+r''r, \: (r''+r')r=r''r+r'r$ holds for any $r,r',r''\in R$ (Distributive law),
    \item $0r=0, \: r0=0$ holds for any $r\in R$ 
    (Absorption law)
     .
\end{enumerate}

\end{defi}

\begin{defi}[Module over Rig]
A commutative monoid $M$ under addition with unit $0$ together with a left action of $R$ on $M$ $(r,m)\mapsto rm$ is called a left module over $R$ if the action satisfies the following:
\begin{enumerate}
    \item $r(m'+m)=rm'+rm, \: (r'+r)m=r'm+rm$ for any $m,m'\in M$ and  $r,r'\in R$.
    \item $0m=0, \: r0=0 $ for any $m\in M$ and $r \in R$.
\end{enumerate}
Dually we can define the notion of right module over $R$.

Let $M$ is left and right module over $R$. $M$ is called $R$-bimodule if 
\[
r'(mr)=(r'm)r
\]
holds for any $r,r'\in R$ and $m\in M$.

The left/right action above is called the scalar multiplication.
\end{defi}


\begin{defi}[Algebra over Rig]
A bimodule $A$ over $R$ is is called an algebra over $R$ if it is also a rig with respect to its own multiplication which is compatible with scalar multiplication, i.e., 
\[
(r'a')(ar)=r'(a'a)r, \; (a'r)a=a'(ra)
\]
for any $a,a' \in A$ and $r,r'\in R$.
\end{defi}

Usually the term "algebra" is defined on rings and algebras are supposed to have negative elements. In this paper, we use the term algebra to mean the module over rig with multiplication. 

\begin{defi}[Category Algebra]
Let $\mathcal{C}$ be a category and $R$ be a rig. 
An $R$-valued function $\alpha$ defined on $\mathcal{C}$ is said to be of backward (resp. forward) finite propagation if for any object $C$ there are at most finite number of arrows in the support of $\alpha$ whose codomain (resp. domain) is $C$.
The module over $R$ consisting of all $R$-valued functions of backward (resp. forward) finite propagation together with the multiplication defined by
\[
({\alpha}' \alpha)(c'') = \sum_{\{(c',c)|\:c''=c'\circ c\}} {\alpha}'(c'){\alpha}(c), \:\: c,c',c''\in \mathcal{C}
\]
becomes an algebra over $R$ with unit $\epsilon$ defined by
\[
\epsilon (c)=  \begin{cases}
            1 & (c \in |\mathcal{C}|) \\
            0 & (otherwise)
            \end{cases},
\]
and is called the category algebra of backward (resp. forward) finite propagation $R_{0}[\mathcal{C}]$ (resp. $^{0}R[\mathcal{C}]$) of $\mathcal{C}$ over $R$. 
The algebra $^{0}R_{0}[\mathcal{C}]$ over $R$ defined as the intersection 
$R_{0}[\mathcal{C}] \cap{^{0}R[\mathcal{C}]}$ is called the category algebra of finite propagation of $\mathcal{C}$ over $R$. 
\end{defi}

\begin{remark}
$^{0}R_{0}[\mathcal{C}]$ coincide with the algebra studied in \cite{MIT} if $R$ is a ring. 
\end{remark}

In the present paper we focus on the category algebras $R_{0}[\mathcal{C}]$,$^{0}R[\mathcal{C}]$ and $^{0}R_{0}[\mathcal{C}]$ which are the same if $|\mathcal{C}|$ is finite, although other extentions or subalgebras of $^{0}R_{0}[\mathcal{C}]$ are also of interest (see Example \ref{groupoid algebra} and \ref{incidence algebra}).

\begin{notation}
In the following we use the term category algebra and the notation $R[\mathcal{C}]$ to denote either of category algebras $R_{0}[\mathcal{C}]$,$^{0}R[\mathcal{C}]$ and $^{0}R_{0}[\mathcal{C}]$.
\end{notation}

\begin{defi}[Indeterminates]
Let $R[\mathcal{C}]$ be a category algebra and $c\in \mathcal{C}$. The function $\iota^{c} \in \: R[\mathcal{C}]$ defined as
\[
\iota^{c}(c')=  \begin{cases}
            1 & (c'=c) \\
            0 & (otherwise)
            \end{cases}
\]
is called the indeterminate (See Example \ref{monoid algebra}) corresponding to $c$.
\end{defi}

For indeterminates, it is easy to obtain the following:

\begin{theorem}[Caluculus of Indeterminates]
Let $c,c'\in \mathcal{C}$, $\iota^{c},\iota^{c'}$ be the corresponding indeterminates and $r\in R$. Then
\[
\iota^{c'}\iota^{c}=  \begin{cases}
            \iota^{c'\circ c} & (\rm{dom}(c')=\rm{cod}(c)) \\
            0 & (otherwise),
            \end{cases}
\]
\[
r\iota^{c}=\iota^{c}r.
\]

\end{theorem}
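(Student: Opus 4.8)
The plan is to compute both products directly from the definitions, since the indeterminates $\iota^c$ are essentially the "matrix units" of the category algebra and the multiplication is the convolution given in the definition of category algebra.

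First I would verify the convolution formula $\iota^{c'}\iota^{c} = \begin{cases}\iota^{c'\circ c} & (\mathrm{dom}(c')=\mathrm{cod}(c))\\ 0 & (\text{otherwise})\end{cases}$. By the multiplication rule, for any target arrow $c''$ we have
\[
(\iota^{c'}\iota^{c})(c'') = \sum_{\{(b',b)\,|\,c''=b'\circ b\}} \iota^{c'}(b')\,\iota^{c}(b).
\]
Because $\iota^{c'}(b')$ vanishes unless $b'=c'$ and $\iota^{c}(b)$ vanishes unless $b=c$, the only term that can survive is the one with $b'=c'$ and $b=c$, and that term contributes $1\cdot 1 = 1$ precisely when the pair $(c',c)$ is actually composable and satisfies $c''=c'\circ c$. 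Composability of $(c',c)$ is exactly the condition $\mathrm{dom}(c')=\mathrm{cod}(c)$. Hence if $\mathrm{dom}(c')=\mathrm{cod}(c)$ the product is the function taking value $1$ at $c''=c'\circ c$ and $0$ elsewhere, which is by definition $\iota^{c'\circ c}$; and if $\mathrm{dom}(c')\neq\mathrm{cod}(c)$ no composable pair exists, so the sum is empty and the product is the zero function.

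Next I would establish the scalar commutation $r\iota^{c}=\iota^{c}r$. Both sides are $R$-valued functions, so it suffices to evaluate at an arbitrary arrow $c'$. Using the scalar multiplication on the module of functions, $(r\iota^c)(c') = r\,\iota^c(c')$ and $(\iota^c r)(c') = \iota^c(c')\,r$; both are $r$ when $c'=c$ and $0$ otherwise, so the two functions agree pointwise. The only subtlety is that this uses $r1=1r=r$ in the rig $R$ together with the absorption law $r0=0r=0$, which hold by the axioms of a rig.

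I do not anticipate a genuine obstacle here, as the statement is essentially a bookkeeping identity; the one point requiring mild care is making explicit that the single surviving summand in the convolution is governed precisely by the composability condition $\mathrm{dom}(c')=\mathrm{cod}(c)$, so that the two cases of the formula are exhaustive and mutually exclusive. I would also briefly note that both products land in $R[\mathcal{C}]$ (that is, they respect the relevant finite-propagation condition), which is immediate since $\iota^{c'\circ c}$ and the zero function have finite—indeed singleton or empty—support.
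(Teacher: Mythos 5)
Your proof is correct and takes exactly the route the paper intends: the paper states this result as immediate from the definitions ("it is easy to obtain"), and your pointwise verification of the convolution formula and the scalar commutation is precisely that routine computation, with the composability condition $\mathrm{dom}(c')=\mathrm{cod}(c)$ correctly identified as what separates the two cases. The added remark that the products have finite (singleton or empty) support, hence lie in $R[\mathcal{C}]$, is a sensible touch of care that the paper leaves implicit.
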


In short, a category algebra $R[\mathcal{C}]$ is an algebra of functions on $\mathcal{C}$ equipped with the multiplication which reflects the compositionality structure of $\mathcal{C}$. By the identification of $c\in \mathcal{C} \mapsto \iota^{c}\in \: R[\mathcal{C}]$, categories are included in category algebras.



Let us establish the basic notions for caluculation in category algebras:

\begin{defi}[Column, Row, Entry]
Let $\alpha \in R[\mathcal{C}]$ and $C,C' \in |\mathcal{C}|$. The elements $\alpha_{C},\; ^{C'}\alpha,\; ^{C'}\alpha_{C} \in R[\mathcal{C}]$ defined as

\[
\alpha_{C}(c) =\begin{cases}
                    \alpha(c) & (c\in\: \mathcal{C}_{C} ) \\
                    0 & (otherwise),
                 \end{cases}
\]
\[
^{C'}\alpha(c) =\begin{cases}
                    \alpha(c) & (c\in\: ^{C'}\mathcal{C} ) \\
                    0 & (otherwise),
                 \end{cases}
\]
\[
^{C'}\alpha_{C}(c) =\begin{cases}
                    \alpha(c) & (c\in\: ^{C'}\mathcal{C}_{C} ) \\
                    0 & (otherwise),
                 \end{cases}
\]
are called the the $C$-column, $C'$-row and $(C',C)$-entry of $\alpha$, respectively.
\end{defi}

Note that either of the data ${\alpha}_{C}$($C\in |\mathcal{C}|$) , $^{C'}{\alpha}$($C'\in |\mathcal{C}|$)  or $^{C'}{\alpha}_{C}$ ($C, C'\in |\mathcal{C}|$) determine $\alpha$.
Moreover, if $|\mathcal{C}|$ is finite,
\[
\alpha=\sum_{C,C'\in |\mathcal{C}|}\;
^{C'}\alpha_{C}.
\]

By definition, the following theorem holds:

\begin{theorem}[Polynomial Expression]\label{polynomial expression}
For any $\alpha \in R[\mathcal{C}]$
\[
^{C'}\alpha_{C}=\sum_{c \in ^{C'}\mathcal{C}_{C}}\alpha(c) \iota^c=\sum_{c \in ^{C'}\mathcal{C}_{C}}\iota^c\alpha(c).
\]

If $|\mathcal{C}|$ is finite,
\[
\alpha=\sum_{c \in \mathcal{C}} \alpha(c)\iota^c=\sum_{c \in \mathcal{C}}\iota^c\alpha(c).
\]


\end{theorem}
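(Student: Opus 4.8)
The plan is to prove each identity by evaluating both sides at an arbitrary arrow $d\in\mathcal{C}$ and comparing values, since two elements of $R[\mathcal{C}]$, being $R$-valued functions on $\mathcal{C}$, are equal exactly when they agree at every arrow, and addition and scalar action in the module are pointwise. Before manipulating sums I would record the finiteness bookkeeping that makes them legitimate: for $\alpha\in R[\mathcal{C}]$ the intersection of the support of $\alpha$ with ${}^{C'}\mathcal{C}_{C}=\mathcal{C}(C,C')$ is finite, because arrows in $\mathcal{C}(C,C')$ have codomain $C'$ (finite by backward finite propagation) and domain $C$ (finite by forward finite propagation), so either defining condition forces finiteness. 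Hence $\sum_{c\in{}^{C'}\mathcal{C}_{C}}\alpha(c)\iota^{c}$ has only finitely many nonzero summands and is a genuine element of the module.

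For the first identity I would unwind the scalar action. By the module structure, $(\alpha(c)\iota^{c})(d)=\alpha(c)\,\iota^{c}(d)$ and $(\iota^{c}\alpha(c))(d)=\iota^{c}(d)\,\alpha(c)$, which by the defining property of $\iota^{c}$ equals $\alpha(c)$ when $d=c$ and $0$ otherwise. Evaluating the sum at a fixed $d$ therefore collapses it: if $d\in{}^{C'}\mathcal{C}_{C}$ exactly the term $c=d$ survives and yields $\alpha(d)$, while if $d\notin{}^{C'}\mathcal{C}_{C}$ every term vanishes. This is precisely the definition of ${}^{C'}\alpha_{C}$, proving ${}^{C'}\alpha_{C}=\sum_{c}\alpha(c)\iota^{c}$; the identical computation with the factors reversed (equivalently, invoking $r\iota^{c}=\iota^{c}r$ from the Calculus of Indeterminates) gives the second expression.

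For the finite-object case I would first note that finiteness of $|\mathcal{C}|$ together with the finite-propagation hypothesis forces the support of $\alpha$ to be globally finite, being a finite union over the finitely many objects of finite sets, so the sum over all of $\mathcal{C}$ is well defined. Then I would combine the decomposition $\alpha=\sum_{C,C'\in|\mathcal{C}|}{}^{C'}\alpha_{C}$ recorded just above the theorem with the first identity and reorganize the resulting double sum via the partition $\mathcal{C}=\bigsqcup_{C,C'\in|\mathcal{C}|}\mathcal{C}(C,C')$, valid because each arrow has a unique domain and codomain, to obtain $\alpha=\sum_{c\in\mathcal{C}}\alpha(c)\iota^{c}$. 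Alternatively, and more directly, the pointwise evaluation of the second paragraph applies verbatim to the global sum.

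I expect no serious obstacle, as the statement is essentially the bookkeeping fact that an arbitrary function is the combination of its own values against the delta functions $\iota^{c}$. The only point demanding genuine care is the finiteness reduction, namely confirming that the finite-propagation condition, and in the second part finiteness of $|\mathcal{C}|$, really guarantees that each sum written down has finite support and hence lands in the module; I would therefore state that reduction explicitly before carrying out the evaluations.
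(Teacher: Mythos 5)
Your proof is correct and matches the paper's approach: the paper offers no explicit argument, stating only that the theorem holds ``by definition,'' and your pointwise evaluation against arrows, together with the collapse of the sum at $d=c$, is exactly that reasoning made explicit. Your finiteness bookkeeping (support of $\alpha$ meets each $\mathcal{C}(C,C')$ in a finite set under either propagation condition, and is globally finite when $|\mathcal{C}|$ is finite) is a worthwhile addition the paper leaves implicit.
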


The formulae above 
clarify that category algebras are generalized polynomial algebra (see Example \ref{monoid algebra}).
On the other hand, the following theorem, which shows that category algebras are generalized matrix algebras (see Example \ref{incidence algebra}), also follows by definition:

\begin{theorem}[Matrix Calculus]\label{matrix calculus}
For any $\alpha, {\alpha}' \in R[\mathcal{C}]$, $C,C'\in |\mathcal{C}|$ and $r\in R$, the followings hold:
\[
(\alpha'+\alpha)_{C}=\alpha'_{C}+\alpha_{C}, \:^{C'}(\alpha'+\alpha)=^{C'}\alpha'+^{C'}\alpha,
\]
\[
^{C'}(\alpha'+\alpha)_{C}=^{C'}\alpha'_{C}+^{C'}\alpha_{C}
\]
\[
(r'\alpha r)_{C}=r'\;\alpha_{C}r,\:\:
^{C'}(r'\alpha r)=r'\;^{C'}\alpha r,\:\:
^{C'}(r'\alpha r)_{C}=r'\;^{C'}\alpha_{C}r
\]
\[
(\alpha'\alpha)_{C}=
\alpha'\; \alpha_{C}=
\sum_{C'' \in |\mathcal{C}|}\; \alpha'_{C''}\; ^{C''}\alpha_{C}
\]
\[
^{C'}(\alpha'\alpha)=
^{C'}\alpha'\; \alpha=
\sum_{C'' \in |\mathcal{C}|}\; ^{C'}\alpha'_{C''}\; ^{C''}\alpha
\]
\[
^{C'}(\alpha'\alpha)_{C}=
^{C'}\alpha'\; \alpha_{C}=
\sum_{C'' \in |\mathcal{C}|}\; ^{C'}\alpha'_{C''}\; ^{C''}\alpha_{C}.
\]
\end{theorem}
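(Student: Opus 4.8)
The plan is to trace every identity back to one structural observation: forming columns, rows and entries is the same as multiplying by the identity-indeterminates. Writing $\epsilon_C:=\iota^{C}$ for the indeterminate of the identity arrow at an object $C$, I would first check straight from the convolution formula that
\[
\alpha_C=\alpha\,\epsilon_C,\qquad {}^{C'}\alpha=\epsilon_{C'}\,\alpha,\qquad {}^{C'}\alpha_C=\epsilon_{C'}\,\alpha\,\epsilon_C .
\]
The only input is that every factorization $c''=c'\circ c$ satisfies $\mathrm{dom}(c)=\mathrm{dom}(c'')$ and $\mathrm{cod}(c')=\mathrm{cod}(c'')$; hence right multiplication by $\epsilon_C$ kills the values at arrows whose domain is not $C$ and preserves the rest, and symmetrically for $\epsilon_{C'}$ on the left. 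This is precisely the Calculus of Indeterminates specialized to identity arrows.

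The additive identities and the scalar identities then cost nothing: since columns, rows and entries are pointwise truncations to a fixed set of arrows, and since addition and scalar multiplication in $R[\mathcal{C}]$ are defined pointwise (a scalar acts on a truncation exactly as on the original function, the omitted values staying zero), both sides agree argument by argument. In the idempotent picture these are equally the distributive law and the algebra-compatibility axioms $(r'a')(ar)=r'(a'a)r$, $(a'r)a=a'(ra)$ applied to the fixed factors $\epsilon_C,\epsilon_{C'}$, using $r\epsilon_C=\epsilon_C r$.

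For the products I would use associativity to obtain the ``restriction pulls out'' equalities, $(\alpha'\alpha)_C=(\alpha'\alpha)\epsilon_C=\alpha'(\alpha\epsilon_C)=\alpha'\alpha_C$ and its row and entry analogues, and then a resolution of identity for the summation formulas. The family $\{\epsilon_C\}_{C\in|\mathcal{C}|}$ consists of orthogonal idempotents, $\epsilon_{C'}\epsilon_C=\delta_{C',C}\epsilon_C$ (again the Calculus of Indeterminates, since $\mathrm{id}_{C'},\mathrm{id}_C$ compose exactly when $C'=C$), and it sums to the unit, $\sum_{C}\epsilon_C=\epsilon$. Inserting the unit and distributing,
\[
\alpha'\,\alpha_C=\alpha'\,\epsilon\,\alpha\,\epsilon_C=\sum_{C''}\alpha'\epsilon_{C''}\alpha\,\epsilon_C=\sum_{C''}(\alpha'\epsilon_{C''})(\epsilon_{C''}\alpha)\epsilon_C=\sum_{C''}\alpha'_{C''}\,{}^{C''}\alpha_C,
\]
which together with the restriction equalities delivers all three displayed sums.

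I expect the one real obstacle to be not algebraic but a matter of well-definedness: when $|\mathcal{C}|$ is infinite the symbols $\sum_{C''\in|\mathcal{C}|}$ and $\sum_{C}\epsilon_C$ are infinite sums, so I must confirm that they name genuine elements of $R[\mathcal{C}]$ and that the sum may be pulled through the convolution. I would settle this by evaluating at a fixed arrow $c''$: by the backward/forward finite-propagation hypothesis the convolution $\sum_{c''=c'\circ c}$ is a finite sum, and for each factorization occurring in it only the single index $C''=\mathrm{cod}(c)=\mathrm{dom}(c')$ contributes, so $\sum_{C''}(\alpha'_{C''}\,{}^{C''}\alpha_C)(c'')$ has only finitely many nonzero terms. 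Thus the family is locally finite, the infinite sum is defined termwise, and the interchange with the convolution is legitimate; the same remark validates $\sum_C\epsilon_C=\epsilon$, each identity arrow being hit by exactly one summand.
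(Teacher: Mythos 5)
Your proof is correct, but it takes a more structured route than the paper, which offers no written argument at all: the theorem is asserted to ``follow by definition,'' i.e.\ by checking each identity pointwise on arrows directly from the definitions of column, row, entry, and the convolution product. You instead verify pointwise only the three base identities $\alpha_C=\alpha\epsilon_C$, ${}^{C'}\alpha=\epsilon_{C'}\alpha$, ${}^{C'}\alpha_C=\epsilon_{C'}\alpha\epsilon_C$, and then derive everything else algebraically from associativity, distributivity, the bimodule compatibility axioms, the orthogonality $\epsilon_{C'}\epsilon_C=\delta_{C'C}\,\epsilon_C$, and the locally finite resolution of identity $\sum_{C}\epsilon_C=\epsilon$. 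This in effect reverses the paper's logical order: there, the identity $\alpha_C=\alpha\epsilon_C$ (used in the theorem right after Matrix Calculus) and the decomposition of unit $\epsilon=\epsilon_C\,{}^{C}\epsilon$ (in the Einstein-convention remark, and only for finite $|\mathcal{C}|$) are presented as consequences of Matrix Calculus, whereas you take them as the starting point --- with no circularity, since you establish them directly from the convolution formula. Your organization buys two things the paper's implicit argument does not make visible: all the combinatorial bookkeeping is concentrated in one place (the idempotent calculus of the $\epsilon_C$), and the well-definedness of the sums $\sum_{C''\in|\mathcal{C}|}$ when $|\mathcal{C}|$ is infinite is addressed explicitly via local finiteness, a convergence point that the paper's statement silently assumes. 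The cost is mild: your argument is longer than the arrow-by-arrow check the paper has in mind, and the purely pointwise identities (additivity, scalar compatibility) are if anything more immediate from the definitions than from the idempotent picture.
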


The theorem above implies the following:

\begin{theorem}
$\alpha \in R[\mathcal{C}]$ is determined by its action on columns $\epsilon_{C}$/ rows  $^{C'}{\epsilon}$ of the unit $\epsilon$ for all $C,C'\in |\mathcal{C}|$.
\end{theorem}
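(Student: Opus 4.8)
The plan is to read the statement as the category-algebra analogue of the elementary linear-algebra fact that a matrix is recovered from its columns $A e_C$ (equivalently from its rows $e_{C'}^{\top}A$), where the $e_C$ are the standard coordinate vectors. Here the role of the coordinate column is played by $\epsilon_C$: unwinding the definitions, $\epsilon_C(c)=1$ precisely when $c$ is an identity arrow with domain $C$, so $\epsilon_C=\iota^{C}$ is the indeterminate attached to the identity on $C$, i.e. the diagonal ``coordinate'' element. The core of the argument is therefore to show that left multiplication by $\alpha$ on $\epsilon_C$ extracts the $C$-column of $\alpha$, and dually that right multiplication on ${}^{C'}\epsilon$ extracts the $C'$-row.

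First I would establish the identity $\alpha\,\epsilon_C=\alpha_C$. This follows immediately from the unit property together with the Matrix Calculus theorem: since $\epsilon$ is the unit we have $\alpha\epsilon=\alpha$, and applying the $C$-column operation and invoking the rule $(\alpha'\alpha)_C=\alpha'\,\alpha_C$ (with $\alpha'=\alpha$ and inner factor $\epsilon$) gives $\alpha_C=(\alpha\epsilon)_C=\alpha\,\epsilon_C$. The dual computation, using $\epsilon\alpha=\alpha$ and ${}^{C'}(\alpha'\alpha)={}^{C'}\alpha'\,\alpha$, yields ${}^{C'}\epsilon\,\alpha={}^{C'}\alpha$.

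Next I would invoke the fact, recorded just after the definition of column, row and entry, that the family $\{\alpha_C\}_{C\in|\mathcal{C}|}$ determines $\alpha$: every arrow $c$ has a unique domain, so $c$ lies in exactly one $\mathcal{C}_C$ and $\alpha(c)=\alpha_{\mathrm{dom}(c)}(c)$, whence knowledge of all columns recovers $\alpha$ pointwise. Combining this with the first step, the collection $\{\alpha\,\epsilon_C\}_{C\in|\mathcal{C}|}$ determines $\alpha$, which is the assertion about columns; the assertion about rows follows dually, since each arrow likewise has a unique codomain and ${}^{C'}\epsilon\,\alpha={}^{C'}\alpha$.

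I do not expect a genuine obstacle: once the identity $\alpha\,\epsilon_C=\alpha_C$ is in place, the theorem is a direct corollary of the Matrix Calculus theorem and the remark that columns (resp. rows) determine an element. The only points demanding care are conceptual rather than technical, namely fixing the precise meaning of \emph{action} (left multiplication for columns, right multiplication for rows) and confirming the identification $\epsilon_C=\iota^{C}$, both of which serve to make the matrix-algebra interpretation, and hence the statement, transparent.
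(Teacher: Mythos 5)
Your proof is correct and follows essentially the same route as the paper: derive $\alpha_C=(\alpha\epsilon)_C=\alpha\,\epsilon_C$ and ${}^{C'}\alpha={}^{C'}\epsilon\,\alpha$ from the unit property via the Matrix Calculus rules, then use the fact that the columns (resp.\ rows) determine $\alpha$. Your added observations --- that $\epsilon_C=\iota^{C}$ and that each arrow lies in exactly one $\mathcal{C}_C$ --- just make explicit what the paper leaves implicit.
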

\begin{proof}

Let $\alpha \in R[\mathcal{C}]$ and $\epsilon$ be the unit of $R[\mathcal{C}]$. Then by definition
\[
\alpha= \alpha \epsilon, \: \alpha =\epsilon \alpha
\]
holds and it implies
$\alpha_{C}= \alpha \epsilon_C , \: ^{C'}{\alpha}=\;^{C'}{\epsilon}\; \alpha,$ which determines $\alpha$.
\end{proof}

\begin{remark}
It is convenient to make use of a kind of "Einstein convention" in physics: Double  appearance of object indices which do not appear elsewhere means the sum over all objects in the category. For instance,
\[
^{C'}(\alpha' \alpha)_C=^{C'}\alpha'_{C''}\;^{C''}\alpha_{C}\]
means
\[
^{C'}(\alpha'\alpha)_{C}=
\sum_{C'' \in |\mathcal{C}|}\; ^{C'}\alpha'_{C''}\; ^{C''}\alpha_{C}.
\]
The notation is quite useful especially for category algebra $R[\mathcal{C}]$ where $|\mathcal{C}|$ is finite. In that case it is easy to show the decomposition of unit:
\[
\epsilon=\epsilon_{C}\;^{C}{\epsilon}.
\]
As a collorary, 
\[
\alpha' \alpha= \alpha' \epsilon \alpha =\alpha' \epsilon_{C}\;^{C}\epsilon \alpha =\alpha'_{C}\;^{C}\alpha,
\]
holds, which means that the multiplication can be interpreted as inner product of columns and rows. Hence, you can insert $_{C}\; ^{C}$ in formulae when $C$ does not appear elsewhere.
\end{remark}

\section{Example of Category Algebras}

Let us see some important examples of category algebras.

\begin{example}[Function Algebra]
Let $\mathcal{C}$ be a set as discrete category, i.e., a category whose arrows are all identities. Then $R[\mathcal{C}]$ is nothing but the $R$-valued function algebra 
on $|\mathcal{C}|$, where the operations are defined pointwise. 
\end{example}

When the rig $R$ is commutative such as $R=\mathbb{C}$, the function algebra is also commutative. On the other hand, a category algebra is in general noncommutative even if the rig is commutative. In this sense, category algebras can be considered as generalized (noncommutative) function algebras.

As we have noted, category algebras can also be considered as generalized polynomial algebras: 

\begin{example}[Monoid Algebra]\label{monoid algebra}
Let $\mathcal{C}$ be a monoid, i.e., a category with only one object. Then $R[\mathcal{C}]$ is the monoid algebra of $\mathcal{C}$. For example, in the case of  $\mathcal{C}=\mathbb{N}$ as  additive monoid, $R[\mathcal{C}]$ is the polynomial algebra over $R$.
\end{example}

Since a monoid $\mathcal{C}$ has only one object, any $\alpha\in R[\mathcal{C}]$ can be presented as,  
\[
\alpha=\sum_{c \in \mathcal{C}} \alpha(c) \iota^c
\]
by Theorem \ref{polynomial expression}
which make it clear that $R[\mathcal{C}]$ is a generalized polynomial algebra.

As special cases of Example \ref{monoid algebra}, we have group algebras.

\begin{example}[Group Algebra]\label{group algebra}
Let $\mathcal{C}$ be a group, i.e., a monoid whose arrows are all invertible. Then $R[\mathcal{C}]$ coincides with the group algebra of $\mathcal{C}$. For example, in the case of  $\mathcal{C}=\mathbb{Z}$, $R[\mathcal{C}]$ is the Laurent polynomial algebra over $R$.
\end{example}

By another generalization of Example \ref{group algebra} other than Example \ref{monoid algebra}, we have groupoid algebras.

\begin{example}[Groupoid Algebra]\label{groupoid algebra}
Let $\mathcal{C}$ be a groupoid, i.e., a category whose arrows are all invertible. When $|\mathcal{C}|$ is finite,   $R[\mathcal{C}]$ is nothing but the groupoid algebra of $|\mathcal{C}|$. Otherwise 
$R[\mathcal{C}]$ is a unital extension of the groupoid algebra in conventional sense which is nonunital. $R[\mathcal{C}]$ is quite useful to treat certain algebras which appeared in quantum physics \cite{CIM}. 
(See Example \ref{adjacency algebra} also.)
\end{example}

As special cases of the the Example \ref{groupoid algebra} we have matrix algebras:

\begin{example}[Matrix Algebra]\label{adjacency algebra}Let $\mathcal{C}$ be an indiscrete category, i.e., a category such that for every pair of objects $C,C'$ there is exactly one arrow from $C$ to $C'$. Denote the cardinal of $|\mathcal{C}|$ is $n$. Then $R[\mathcal{C}]$ is isomorphic to the matrix algebra $M_n(R)$. 
\end{example}


Example \ref{adjacency algebra} above shows that matrix algebras are category algebras. Conversely, any category algebra can be considered as generalized matrix algebra (see Theorem \ref{matrix calculus}). This point of view is also useful to study quivers \cite{GAB}, i.e., directed graphs with multiple edges and loops.

\begin{example}[Path Algebra]\label{path algebra}
Let $\mathcal{C}$ be the free category of a quiver $Q$. $R[\mathcal{C}]$ coincides with the notion of path algebra when the quiver $Q$ has finite number of vertices. Otherwise the former includes the latter as a subalgebra. 
\end{example}
Another important origin of the notion of category algebra is that of incidence algebra  (\cite{CF,LER,CLL,HAI,LM,LEI}, for example) originally studied on posets \cite{ROT}.

\begin{example}[Incidence Algebra] \label{incidence algebra}
Let $\mathcal{C}$ be a finely finite category \cite{LEI}, i.e., a category  such that for any $c \in \mathcal{C}$ there exist finite number of pairs of arrows $c',c'' \in \mathcal{C}$ satisfiyng $c=c'\circ c''$. Then $R^{\mathcal{C}}$, the set of all functions from $\mathcal{C}$ to $R$, becomes a unital algebra and called the incidence algebra of $\mathcal{C}$ over $R$.
\end{example}

Let $\mathcal{C}$ be a category such that for any $C\in \mathcal{C}$ there exist at most finitely many arrows whose codomain is $C$. Then $R_0[\mathcal{C}]$ coincides with 
the incidence algebra on $\mathcal{C}$. 
(One of the most classical examples is the poset consisting of all positive integers ordered by divisibility).
For the category satisfying the condition above, $R[\mathcal{C}]$ includes the zeta function $\zeta$ defined as
\[
\zeta(c)=1
\]
for all $c$. The multiplicative inverse of $\zeta$ is denoted as $\mu$ and called M\"{o}bius function. The relation  $\mu\zeta=\zeta\mu=\epsilon$ is a generalization of the famous M\"{o}bius inversion formula, which has been considered as the foundation of combinatorial theory since one of the most important papers in modern combinatorics \cite{ROT}. 


\section{States on Categories}

We will introduce the notion of states on categories to provide a foundation for stochastic theories on categories. As we will see, we can construct noncommutative probability space, a generalized notion of measure theoretic probability space based on category algebras. The key insight is that what we need to establish statistical law is the expectation functional, which is the functional which maps each random variable (or "observable" in the quantum physical context) to its expectation value. Considering a functional on $R[C]$ as expectation functional, we can interpret $R[C]$ as an algebra of noncommutative random variables, such as observables of quanta.   

\begin{defi}[Linear Functional]
Let $A$ be an algebra over a rig $R$. An $R$-valued linear function on $A$, i.e., a function preserving addtion and scalar multiplication, is called a linear functional on $A$.
A linear functional on $A$ is said to be unital if  $\varphi(\epsilon)=1$ where $\epsilon$ and $1$ denote the multiplicative unit in $A$ and $R$, respectively.
\end{defi}

\begin{defi}[Linear Functional on Category]
Let $R$ be a rig and $\mathcal{C}$ be a category. A  (unital) linear functional on $R[\mathcal{C}]$ is said to be an $R$-valued (unital) linear functional on the category $\mathcal{C}$.
\end{defi}

Although the main theme here is stochastic theory making use of positivity structure defined later, linear functionals on category algebras are used not only in the context with positivity. A very interesting example is "umbral calculus" \cite{RR}, an interesting tool in combinatorics, which can be interpreted as the theory of linear functionals on certain monoid algebras. Hence, studying the linear functionals on a category will lead to a generalization of umbral calculus. 

Given a linear functional on a category, we obtain a function on the set of arrows. For categories with a finite number of objects, we can characterize the former in terms of the latter:

\begin{theorem}[Linear Fuctional and Function]

Let $\varphi$ be a $R$-valued linear fuctional on $\mathcal{C}$. Then the function $\hat{\varphi}$ defined as
\[\hat{\varphi}(c)=\varphi(\iota^{c})
\]
becomes a $Z(R)$-valued function on $\mathcal{C}$, i.e., an R-valued function satisfying  $r\hat{\varphi}(c)=\hat{\varphi}(c)r$ for any $c\in \mathcal{C}$ and $r\in R$. Conversely, when $|\mathcal{C}|$ is finite, any $Z(R)$-valued function $\phi$ on $\mathcal{C}$ gives $R$-valued linear functional $\check{\phi}$ defined as
\[
\check{\phi}(\alpha)
=\sum_{c \in \mathcal{C}}\alpha(c)\phi(c)
=\sum_{c \in \mathcal{C}}\phi(c)\alpha(c)
\]
and the correspondence 
is bijective.
\end{theorem}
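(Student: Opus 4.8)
The plan is to treat the two assignments $\varphi \mapsto \hat{\varphi}$ and $\phi \mapsto \check{\phi}$ as candidate inverse bijections, verifying separately that each is well-defined and then that the two composites are identities. The forward assignment works for arbitrary $\mathcal{C}$, whereas the converse and the bijectivity use the finiteness of $|\mathcal{C}|$ in an essential way, so I would keep those hypotheses visibly in play.

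For the forward direction I would show $\hat{\varphi}(c)=\varphi(\iota^{c})$ is central by a one-line computation resting on the Calculus of Indeterminates. Since $\varphi$ preserves scalar multiplication on both sides, for any $r\in R$ and $c\in\mathcal{C}$,
\[
r\hat{\varphi}(c)=r\varphi(\iota^{c})=\varphi(r\iota^{c})=\varphi(\iota^{c}r)=\varphi(\iota^{c})r=\hat{\varphi}(c)r,
\]
where the central equality is exactly the relation $r\iota^{c}=\iota^{c}r$. As this holds for every $r$, the value $\hat{\varphi}(c)$ lies in $Z(R)$, so $\hat{\varphi}$ is a $Z(R)$-valued function.

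For the converse I would first record why the defining sum makes sense: when $|\mathcal{C}|$ is finite, the (backward/forward/two-sided) finite-propagation condition forces every $\alpha\in R[\mathcal{C}]$ to have finite support, since there are finitely many codomains (resp.\ domains), each carrying only finitely many support-arrows. Thus $\check{\phi}(\alpha)=\sum_{c}\alpha(c)\phi(c)$ has only finitely many nonzero terms. The two displayed expressions for $\check{\phi}(\alpha)$ agree because each $\phi(c)$ is central. Additivity of $\check{\phi}$ then follows from the distributive law, compatibility with left scalar multiplication is immediate, and compatibility with right scalar multiplication uses centrality of $\phi(c)$ once more to slide the scalar past $\phi(c)$. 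Hence $\check{\phi}$ is a genuine $R$-valued linear functional.

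Finally, for bijectivity I would check both composites. Evaluating on indeterminates gives $\widehat{\check{\phi}}(c)=\check{\phi}(\iota^{c})=\phi(c)$, since $\iota^{c}(c')$ equals $1$ for $c'=c$ and $0$ otherwise, so $\widehat{\check{\phi}}=\phi$. For the other composite the key input is the Polynomial Expression theorem: when $|\mathcal{C}|$ is finite one has $\alpha=\sum_{c}\alpha(c)\iota^{c}$, whence linearity of $\varphi$ yields $\varphi(\alpha)=\sum_{c}\alpha(c)\varphi(\iota^{c})=\check{\hat{\varphi}}(\alpha)$, i.e.\ $\check{\hat{\varphi}}=\varphi$. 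The point to watch — more a subtlety than a genuine obstacle — is that finiteness of $|\mathcal{C}|$ is used twice and cannot be dropped: once to guarantee $\check{\phi}$ is defined on all of $R[\mathcal{C}]$, and once through the Polynomial Expression to recover $\alpha$ from its pointwise values, which is precisely what makes $\varphi\mapsto\hat{\varphi}$ both injective and surjective onto the $Z(R)$-valued functions.
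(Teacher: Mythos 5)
Your proposal is correct and follows essentially the same route as the paper: the forward direction via the relation $r\iota^{c}=\iota^{c}r$ and linearity of $\varphi$, and the converse plus bijectivity via the Polynomial Expression theorem. The paper compresses the converse and bijectivity into a single sentence (``directly follows from definitions and Theorem \ref{polynomial expression}''), so your explicit verification of well-definedness of the sum, linearity of $\check{\phi}$, and the two composite identities is simply a fleshed-out version of the same argument.
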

\begin{proof}

Let $\varphi$ be a $R$-valued linear functional. Since $r\iota^{c}=\iota^{c}r$ for any $r\in R$ and $c\in \mathcal{C}$, we have $r\varphi(\iota^{c})=\varphi(\iota^{c})r$ which means $r\hat{\varphi}(c)=\hat{\varphi}(c)r$. The converse direction and bijectivity directly follows from definitions and Theorem \ref{polynomial expression}. 
\end{proof}

As a collorary we also have the following:

\begin{theorem}[Unital Linear Functional and Normalized Function]
Let $\mathcal{C}$ be a category such that $|\mathcal{C}|$ is finite. Then there is one to one correspondence between $R$-valued unital linear functionals $\varphi$ 
and normalized $Z(R)$-valued functions $\phi$ 
on $\mathcal{C}$, i.e., $Z(R)$-valued functions $\phi$ satisfying 
\[
\sum_{C\in |\mathcal{C}|}\phi(C)=1.
\]
(Note that we identify objects and identity arrows.)
\end{theorem}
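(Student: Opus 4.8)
The plan is to obtain this statement as a restriction of the bijection established in the preceding theorem (Linear Functional and Function). That result already gives a bijective correspondence $\varphi \mapsto \hat{\varphi}$, with inverse $\phi \mapsto \check{\phi}$, between $R$-valued linear functionals on $\mathcal{C}$ and $Z(R)$-valued functions on $\mathcal{C}$, valid precisely because $|\mathcal{C}|$ is finite. So it suffices to check that this bijection carries unital functionals exactly onto normalized functions, i.e. that $\varphi(\epsilon)=1$ holds if and only if $\sum_{C\in|\mathcal{C}|}\hat{\varphi}(C)=1$.

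First I would express the unit $\epsilon$ in polynomial form. Since $|\mathcal{C}|$ is finite, Theorem \ref{polynomial expression} applies and yields $\epsilon=\sum_{c\in\mathcal{C}}\epsilon(c)\iota^{c}$. By the defining formula for $\epsilon$, only the identity arrows contribute a coefficient $1$, so that $\epsilon=\sum_{C\in|\mathcal{C}|}\iota^{C}$, where we use the identification of objects with identity arrows. Applying linearity of $\varphi$ together with the relation $\hat{\varphi}(c)=\varphi(\iota^{c})$, I would then compute $\varphi(\epsilon)=\sum_{C\in|\mathcal{C}|}\varphi(\iota^{C})=\sum_{C\in|\mathcal{C}|}\hat{\varphi}(C)$. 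Hence $\varphi$ is unital exactly when $\hat{\varphi}$ is normalized; symmetrically, evaluating $\check{\phi}(\epsilon)=\sum_{C\in|\mathcal{C}|}\phi(C)$ shows that $\check{\phi}$ is unital exactly when $\phi$ is normalized. Combined with the bijection of the preceding theorem, this restricts to the claimed one-to-one correspondence.

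I do not expect any genuine obstacle here, since the substantive content is carried entirely by the earlier bijection; the only new ingredient is the polynomial expansion of $\epsilon$ together with linearity of the functional. If anything demands care, it is the bookkeeping of the identification of objects with identity arrows, so that the sum $\sum_{C\in|\mathcal{C}|}$ appearing in the normalization condition matches exactly the sum over identity arrows in the expansion of $\epsilon$.
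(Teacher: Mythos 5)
Your proposal is correct and follows exactly the route the paper intends: the paper states this result as a corollary of the preceding theorem (Linear Functional and Function), and your argument fills in the only missing detail, namely that the bijection $\varphi \leftrightarrow \phi$ matches $\varphi(\epsilon)=1$ with $\sum_{C\in|\mathcal{C}|}\phi(C)=1$ via the finite expansion $\epsilon=\sum_{C\in|\mathcal{C}|}\iota^{C}$ and linearity.
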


To define the notion of state as generalized probability measure which can be applied in noncommutative contexts such as stochastic theory on category algebras, we need the notions of involution and positivity structure. 

\begin{defi}[Involution on Category]\label{involution on category}
Let $\mathcal{C}$ be a category. A covariant/contravariant endofunctor $(\cdot)^{\dagger}$ on $\mathcal{C}$ is said to be a covariant/contravariant involution on $C$ when $(\cdot)^{\dagger} \circ (\cdot)^{\dagger}$ is equal to the identity functor on $\mathcal{C}$. A category with contravariant involution which is identity on objects is called a $^{\dagger}$-category.
\end{defi}

\begin{remark}
For the studies on involutive categories, which are categories with involution satisfying certain conditions, see \cite{JAC,YAU} for example.

\end{remark}

\begin{defi}[Involution on Rig]\label{involution on rig}
Let $R$ be a rig. An operation $(\cdot)^{\ast}$ on $R$ preserving addition and covariant/contravariant with respect to multiplication is said to be a covariant/contravariant involution on $R$ when $(\cdot)^{\ast} \circ (\cdot)^{\ast}$ is equal to the identity function on $R$. A rig with contravariant involution is called a $^{\ast}$-rig.
\end{defi}

\begin{defi}[Involution on Algebra]\label{involution of algebra}
Let $A$ be an algebra over a rig $R$ with an covariant (resp. contravariant) involution $\overline{(\cdot)}$ . A covariant (resp. contravariant) involution $(\cdot)^{\ast}$ on $A$ as a rig is said to be a covariant (resp. contravariant) involution on $A$ as an algebra over $R$ if it is compatible with scalar multiplication, that is, 
\[
(r'ar)^{\ast}=\overline{r'}a^{\ast}\overline{r} \:\:\: \text{(covariant case)},\:\:\: 
(r'ar)^{\ast}=\overline{r}a^{\ast}\overline{r'} \:\:\: \text{(contravariant case)}. 
\]
An algebra $A$ over a $^{\ast}$-rig $R$ with contravariant involution is called a $^{\ast}$-algebra over $R$.
\end{defi}

\begin{theorem}[Category Algebra as Algebra with Involution]
Let $\mathcal{C}$ be a category with a covariant (resp. contravariant) involution $(\cdot)^{\dagger}$  and $R$ be a rig with a covariant (resp. contravariant) involution $\overline{(\cdot)}$. Then the category algebra $^{0}R_{0}[\mathcal{C}]$ becomes an algebra with covariant involution (resp. ${}^{\ast}$-algebra) over $R$.
\begin{proof}
The operation $(\cdot)^{\ast}$ defined as 
$\alpha^{\ast}(c)=\overline{\alpha({c^{\dagger}})}$
becomes a covariant (resp. contravariant) involution on $^{0}R_{0}[\mathcal{C}]$.
For the contravariant case,
\[
    (\alpha \beta)^{\ast}(c) 
    = \overline{\alpha \beta(c^{\dagger})}
    =\overline{\sum_{c^{\dagger}=c'\circ c''}\alpha(c')\beta(c'')}
    =\sum_{c^{\dagger}=c'\circ c''}\overline{\alpha(c')\beta(c'')}
    =\sum_{c^{\dagger}=c'\circ c''}\overline{\beta(c'')}\; \overline{\alpha(c')}
\]
which is equal to $\sum_{c={c''}^{\dagger}\circ {c'}^{\dagger}}\overline{\beta(c'')}\;\overline{\alpha(c')}$. By changing the labels of arrows it can be rewritten as
\[
\sum_{c={c''}^{\dagger}\circ {c'}^{\dagger}}\overline{\beta(c'')}\;\overline{\alpha(c')}
=\sum_{c=c'\circ c''}\overline{\beta({c'}^{\dagger})}\;\overline{\alpha({c''}^{\dagger})}
=\sum_{c=c'\circ c''}\beta^{\ast}(c') \alpha^{\ast}(c'')
=\beta^{\ast}\alpha^{\ast}(c).
\]
The proof for the covariant case is similar and more straightforward.
\end{proof}
\end{theorem}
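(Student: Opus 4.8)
The plan is to verify directly that the candidate operation $\alpha^{\ast}(c) := \overline{\alpha(c^{\dagger})}$ satisfies each of the defining properties of an (anti-)involution on $^{0}R_{0}[\mathcal{C}]$ as an algebra over $R$, in the sense of Definitions \ref{involution on rig} and \ref{involution of algebra}. First I would check that the operation is well defined, i.e. that it maps $^{0}R_{0}[\mathcal{C}]$ into itself. Since $\overline{(\cdot)}$ is an additive bijection (hence $\overline{0}=0$), the support of $\alpha^{\ast}$ is exactly the image $(\mathrm{supp}\,\alpha)^{\dagger}$. In the contravariant case $(\cdot)^{\dagger}$ exchanges domain and codomain, so the backward (resp. forward) finite propagation of $\alpha^{\ast}$ translates into the forward (resp. backward) finite propagation of $\alpha$; in the covariant case $(\cdot)^{\dagger}$ may permute objects but preserves the domain/codomain role, so each propagation condition survives after relabelling the relevant object. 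Either way, because $^{0}R_{0}[\mathcal{C}]$ demands both propagation conditions simultaneously, membership is preserved.

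Next I would dispatch the easy structural axioms. Additivity, $(\alpha+\beta)^{\ast}=\alpha^{\ast}+\beta^{\ast}$, is immediate from pointwise addition together with additivity of $\overline{(\cdot)}$. The involutive property $(\alpha^{\ast})^{\ast}=\alpha$ follows by applying the definition twice and using $(c^{\dagger})^{\dagger}=c$ and $\overline{\overline{r}}=r$. Compatibility with scalar multiplication is a one-line pointwise check: evaluating $(r'\alpha r)^{\ast}$ at $c$ gives $\overline{r'\,\alpha(c^{\dagger})\,r}$, and the contravariance (resp. covariance) of $\overline{(\cdot)}$ with respect to multiplication reverses (resp. preserves) the order of the three factors, producing exactly $\overline{r}\,\alpha^{\ast}\,\overline{r'}$ (resp. $\overline{r'}\,\alpha^{\ast}\,\overline{r}$), which is the clause required by Definition \ref{involution of algebra}.

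The substantive step, and the one I expect to be the main obstacle, is multiplicativity: showing $(\alpha\beta)^{\ast}=\beta^{\ast}\alpha^{\ast}$ in the contravariant case and $(\alpha\beta)^{\ast}=\alpha^{\ast}\beta^{\ast}$ in the covariant case. Here I would expand $(\alpha\beta)^{\ast}(c)=\overline{(\alpha\beta)(c^{\dagger})}$ via the convolution formula, push $\overline{(\cdot)}$ through the finite sum, and use its contravariance to reverse each product as $\overline{\alpha(c')\beta(c'')}=\overline{\beta(c'')}\,\overline{\alpha(c')}$. The key manoeuvre is reindexing the summation: the factorizations $c^{\dagger}=c'\circ c''$ are in bijection, via $(\cdot)^{\dagger}$, with the factorizations $c={c''}^{\dagger}\circ{c'}^{\dagger}$, precisely because $(\cdot)^{\dagger}$ is a contravariant functor, so $(c'\circ c'')^{\dagger}={c''}^{\dagger}\circ{c'}^{\dagger}$, and an involution, so this assignment is a genuine bijection of index sets. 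Renaming the dummy arrows then identifies the resulting sum with $(\beta^{\ast}\alpha^{\ast})(c)$. I would also confirm that the convolution sums are finite, which is guaranteed by the finite-propagation conditions, so that rearranging and reindexing terms is legitimate. The covariant case is a simplified variant of the same computation: now $(\cdot)^{\dagger}$ respects composition order and $\overline{(\cdot)}$ respects multiplication order, so no reversal occurs and one obtains a homomorphism rather than an anti-homomorphism.
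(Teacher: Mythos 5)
Your proposal is correct and takes essentially the same route as the paper: the heart of both arguments is the identical computation for $(\alpha\beta)^{\ast}$, pushing $\overline{(\cdot)}$ through the finite convolution sum and reindexing via the bijection between factorizations $c^{\dagger}=c'\circ c''$ and $c={c''}^{\dagger}\circ{c'}^{\dagger}$ furnished by the contravariant involution. The only difference is that you also spell out the well-definedness check (that $(\cdot)^{\ast}$ swaps the backward and forward finite-propagation conditions, which is exactly why the statement concerns $^{0}R_{0}[\mathcal{C}]$) and the routine axioms, details the paper leaves implicit.
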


Every category/rig has a trivial involution (identity). Thus, any category algebra $^{0}R_{0}[\mathcal{C}]$ can be considered as algebra with involution. In physics, especially quantum theory, the ${}^{\ast}$-algebra $^{0}R_{0}[\mathcal{C}]$ where $\mathcal{C}$ is a groupoid as $^{\dagger}$-category with inversion as involution and $R=\mathbb{C}$ as $^{\ast}$-rig with complex coujugate as involution. (For the importance of groupoid algebra in physics, see \cite{CIM} and references therein, for example).

Based on the involutive structure we can define the positivity structure on algebras:

\begin{defi}[Positivity]\label{positivity}
A pair of rigs with involution $(R,R_{+})$ is called a positivity structure on $R$ if $R_{+}$ is a subrig such that $r,s \in R_{+}$ and $r+s=0$
implies $r=s=0$, and that $a^{\ast}a\in R_{+}$ for any $a\in R$. 

\end{defi}

The most typical examples are $(\mathbb{C},\mathbb{R}_{\geq 0})$,
$(\mathbb{R},\mathbb{R}_{\geq 0})$,
and $(\mathbb{R}_{\geq 0},\mathbb{R}_{\geq 0})$. Another interesting example is the tropical algebraic one $(\mathbb{R}\cup \{\infty\},\mathbb{R}\cup \{\infty\})$ where $\mathbb{R}\cup \{\infty\}$ is considered as a rig with respect to $\min$ and $+$.

\begin{defi}[State]\label{state}
Let $R$ be a rig with involution 
and $(R,R_{+})$ be a positivity stucture on $R$. A state $\varphi$ on an algebra $A$ with involution 
over $R$ with respect to $(R,R_{+})$ is a unital linear functional $\varphi :  A\longrightarrow R$ which satisfies $\varphi(a^{\ast}a) \in R_{+}$ and 
$\varphi(a^{\ast})=\overline{\varphi(a)}$ for any $a\in R$, where $(\cdot)^{\ast}$ and $\overline{(\cdot)}$ denotes the  involution on $A$ and $R$, respectively.

\end{defi}

\begin{remark}
The last condition $\varphi(a^{\ast})=\overline{\varphi(a)}$ follows from other conditions if $R=\mathbb{C}$.
\end{remark}

\begin{defi}[Noncommutative Probability Space]
A pair $(A,\varphi)$ consisting of an algebra $A$ with involution over a rig $R$ with involution and an $R$-valued state $\varphi$ is called a noncommutative probability space.
\end{defi}

There are many studies on noncommutative probability spaces where the algebra $A$ is a $^{\ast}$-algebra over $\mathbb{C}$. As is well known, the notion of noncommutative probability space  essentially includes the one of probability spaces in conventional sense, which corresponds to the cases that algebras $A$ are commutative $^{\ast}$-algebras (with certain topological structure). On the other hand, when the algebras are noncommutative, noncommutative probability spaces provide many examples which cannot be reduced to conventional probability spaces, such as models for quantum systems.

\begin{defi}[State on Category]

Let $R$ be a rig with involution and $(R,R_{+})$ be a positivity stucture on $R$. A state on the category algebra $^{0}R_{0}[\mathcal{C}]$ over $R$ with respect to $(R,R_{+})$ is said to be a state on a category $\mathcal{C}$ with respect to $(R,R_{+})$.
\end{defi}

As category algebras are in general noncommutative, states on categories provide many concrete noncommutative probability spaces generalizing such simplest examples as interacting Fock spaces \cite{AB} which are generalized harmonic oscillators, where the categories are indiscrete categories corresponding to certain graphs. 

The notion of state can be characterized for the categories with finite number of objects as follows:

\begin{theorem}[State and Normalized Positive Semidefinite Function]
Let $\mathcal{C}$ be a category such that $|\mathcal{C}|$ is finite. Then there is one to one correspondence between states $\varphi$ with respect to $(R,R_{+})$ and normalized positive semidefinite $Z(R)$-valued functions $\phi$ with respect to $(R,R_{+})$, i.e., normalized functions such that
\[
\sum_{\{(c,c')
| \rm{dom}((c')^{\dagger})=\rm{cod}(c)\}}\overline{\xi(c')}\phi((c')^{\dagger}\circ c)\xi(c)
\]
is in $R_{+}$ for any function $\xi$ on $\mathcal{C}$ with finite support and that $\phi(c^{\dagger})=\overline{\phi(c)}$, where $(\cdot)^{\ast}$ and $\overline{(\cdot)}$ denotes the  involution on $A$ and $R$, respectively.
\end{theorem}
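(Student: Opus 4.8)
The plan is to show that the bijection $\varphi\mapsto\hat{\varphi}$, $\phi\mapsto\check{\phi}$ between unital linear functionals and normalized $Z(R)$-valued functions, already established in the previous theorem, restricts to a bijection between states and normalized positive semidefinite functions. Since that bijection is given explicitly by $\hat{\varphi}(c)=\varphi(\iota^{c})$ and $\check{\phi}(\alpha)=\sum_{c\in\mathcal{C}}\alpha(c)\phi(c)$, it suffices to prove that, under this correspondence, the two defining conditions of a state, namely the symmetry $\varphi(a^{\ast})=\overline{\varphi(a)}$ and the positivity $\varphi(a^{\ast}a)\in R_{+}$, are equivalent to the two conditions imposed on $\phi$. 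Throughout I would use that, because $|\mathcal{C}|$ is finite, $^{0}R_{0}[\mathcal{C}]$ consists exactly of the finitely supported $R$-valued functions on $\mathcal{C}$, so a general algebra element $a$ is precisely a finitely supported $\xi$, and I would repeatedly invoke the polynomial expression $\alpha=\sum_{c}\alpha(c)\iota^{c}$ from Theorem \ref{polynomial expression}.

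First I would record the behaviour of the involution on indeterminates: from $\alpha^{\ast}(c)=\overline{\alpha(c^{\dagger})}$ one computes $(\iota^{c})^{\ast}=\iota^{c^{\dagger}}$. For the symmetry condition, expanding $a$ in polynomial form and applying linearity of $\varphi$ gives $\varphi(a^{\ast})=\sum_{d}\overline{\xi(d)}\,\phi(d^{\dagger})$ and $\overline{\varphi(a)}=\sum_{d}\overline{\xi(d)}\,\overline{\phi(d)}$, where the latter uses that the contravariant rig involution $\overline{(\cdot)}$ leaves $Z(R)$ stable so that the factors may be recommuted. Hence $\varphi(a^{\ast})=\overline{\varphi(a)}$ for all $a$ is equivalent to $\phi(c^{\dagger})=\overline{\phi(c)}$ for all $c$, the forward direction following by comparing coefficients (take $a=\iota^{c}$) and the converse being immediate.

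The crux is the positivity condition, and the main work is a reindexing computation. Writing $a=\xi$, I would compute the convolution $(a^{\ast}a)(c'')=\sum_{c''=c'\circ c}\overline{\xi((c')^{\dagger})}\,\xi(c)$, apply $\check{\phi}$, and substitute $c'=(d')^{\dagger}$: the condition that $c'\circ c$ be defined becomes $\mathrm{dom}((d')^{\dagger})=\mathrm{cod}(c)$ and the composite becomes $(d')^{\dagger}\circ c$. After renaming $d'$ to $c'$ and using that $\phi$ is $Z(R)$-valued to slide $\phi((c')^{\dagger}\circ c)$ between $\overline{\xi(c')}$ and $\xi(c)$, I obtain
\[
\varphi(a^{\ast}a)=\sum_{\{(c,c')\,|\,\mathrm{dom}((c')^{\dagger})=\mathrm{cod}(c)\}}\overline{\xi(c')}\,\phi((c')^{\dagger}\circ c)\,\xi(c),
\]
which is exactly the positive semidefinite sum of the statement. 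Since $a\leftrightarrow\xi$ ranges over all finitely supported functions, $\varphi(a^{\ast}a)\in R_{+}$ for every $a$ is equivalent to that sum lying in $R_{+}$ for every $\xi$. The main obstacle I anticipate is purely bookkeeping: keeping the contravariant categorical involution $(\cdot)^{\dagger}$ and the rig involution $\overline{(\cdot)}$ correctly aligned through the reindexing, and justifying that the $Z(R)$-valuedness of $\phi$ legitimately permits the commutations needed to reproduce the stated normal form verbatim. Combining the two equivalences with the already-known normalization correspondence then yields the claimed bijection.
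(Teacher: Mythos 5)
Your proposal is correct and follows essentially the same route as the paper: both reduce the claim to the bijection of the preceding theorem and then verify, by the same reindexing computation with $c' \mapsto (c')^{\dagger}$, that $\varphi(\xi^{\ast}\xi)$ equals the stated positive semidefinite sum, with the symmetry condition $\varphi(\xi^{\ast})=\overline{\varphi(\xi)}$ matching $\phi(c^{\dagger})=\overline{\phi(c)}$. The only cosmetic difference is that the paper expands $\xi^{\ast}\xi$ in indeterminates before applying $\varphi$, whereas you apply $\check{\phi}$ to the convolution directly; your treatment of the symmetry condition is in fact slightly more explicit than the paper's.
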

\begin{proof}

Note that a function $\xi$ on $\mathcal{C}$ with finite support can be considered as an element in $^{0}R_{0}[\mathcal{C}]$ and vice versa when $|\mathcal{C}|$ is finite. Then the theorem follows from the identity 
\begin{equation*}
\begin{split}
  \xi^{\ast}\xi
  &=(\sum_{c'\in \mathcal{C}}\overline{\xi((c')^{\dagger})}\iota^{c'})(\sum_{c\in\mathcal{C}}\iota^{c}\xi(c))\\
  &=(\sum_{c'\in \mathcal{C}}\overline{\xi(c')}\iota^{(c')^{\dagger}})(\sum_{c\in\mathcal{C}}\iota^{c}\xi(c))\\
  &=\sum_{\{(c,c')
  | \rm{dom}((c')^{\dagger})=\rm{cod}(c)\}}\overline{\xi(c')}\iota^{(c')^{\dagger}\circ c}\xi(c).
\end{split}  
\end{equation*}
and the condition corresponding to $\varphi(\xi^{\ast})=\overline{\varphi(\xi)}$.
\end{proof}

The theorem above is a generalization of the result stated in the section 2.2.2 in \cite{CIM} for groupoid algebras over $\mathbb{C}$.
For the case of discrete category, the notion coincides with the notion of probability measure on objects (identity arrows). Hence, the notion of state on category can be considered as noncommutative generalization of probability measure which is associated to the transition from set as discrete category (0-category) to general category (1-category).

Given a state on a $^{\dagger}$-category, we can construct a kind of GNS(Gelfand-Naimark-Segal) representation \cite{GN,SEG} (as for generalized constructions, see \cite{STE, PAS, JAC, PAR, YAU} for example) in a semi-Hilbert module defined below, 
a generalization of Hilbert space:

\begin{defi}[Semi-Hilbert Module over Rig]
Let $R$ be a rig with involution $\overline{(\cdot)}$. A right module $E$ over $R$ equipped with a positive semidefinite sesquilinear form, i.e., a function $\langle \cdot | \cdot \rangle: E\times E\longrightarrow R$ satisfying
\[
\langle v''|v'r'+vr\rangle=\langle v''| v'\rangle r' +\langle v''|v\rangle r
\]
\[
\langle v' | v \rangle=\overline{\langle v | v' \rangle}
\]
\[
\langle v|v \rangle \in R_{+}
\]
for any $v,v',v'' \in E$ and $r,r'\in R$
is called a semi-Hilbert module over $R$.
\end{defi}

When a semi-Hilbert module over $E$ is also a left module over $R$, the set $\mathrm{End}(E)$ consisting of module endomorphisms over $R$ on $E$ becomes an algebra over $R$: The bimodule structure is given by $(r'Tr)(v)=r'T(rv)$, where 
$T\in  \mathrm{End}(E)$ and $r,r'\in R$.

\begin{theorem}[Generalized GNS Representation]
Let $A$ be an $^{\ast}$-algebra over a rig $R$ with involution $(\cdot )^{\ast}$. For any state $\varphi$ on $A$ with respect to $(R,R_{+})$,  there exist a semi-Hilbert module $E^{\varphi}$ over $R$ which is also a left $R$ module equipped with a positive semidefinite sesquilibear form $\langle \cdot | \cdot \rangle^{\varphi}$, an element $e^{\varphi} \in E^{\varphi}$ such that $\langle e^{\varphi}| e^{\varphi} \rangle^{\varphi}=1$, and a homomorphism $\pi^{\varphi}: A \longrightarrow \mathrm{End} (E^{\varphi})$ between algebras over $R$ such that
\[
\varphi(\alpha)=\langle e^{\varphi}| \pi^{\varphi}(\alpha) e^{\varphi} \rangle^{\varphi}
\]
and
\[
\langle v'|\pi^{\varphi}(\alpha)v\rangle^{\varphi}=\langle \pi^{\varphi}(\alpha^{\ast})v'|v\rangle^{\varphi}
\]
hold for any $\alpha \in A$ and $v,v'\in E^{\varphi}$.

\begin{proof}
Let $E^{\varphi}$ be the algebra $A$ itself as a module over $R$ equipped with $\langle \cdot | \cdot \rangle^{\varphi}$
defined by $\langle \alpha' | \alpha \rangle^{\varphi}=\varphi((\alpha')^{\ast}\alpha)$. It is easy to show that $\langle \cdot | \cdot \rangle^{\varphi}$ is a positive semidefinite sesquilinear form and satisfies $\varphi(\alpha)=\langle e^{\varphi}| \pi^{\varphi}(\alpha) e^{\varphi} \rangle^{\varphi},
$ and $\langle v'|\pi^{\varphi}(\alpha)v\rangle^{\varphi}=\langle \pi^{\varphi}(\alpha^{\ast})v'|v\rangle^{\varphi}$
where $\pi^{\varphi}$ denotes the homomorphism $\pi^{\varphi}: A \longrightarrow \mathrm{End} (E^{\varphi})$ defined by  $\pi^{\varphi}(\alpha)=\alpha(\cdot)$, the left multiplication by $\alpha$, and $e^{\varphi}$ denotes the unit $\epsilon$ of $A$ as an element of $E^{\varphi}$.
\end{proof}
\end{theorem}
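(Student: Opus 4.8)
The plan is to carry out a Gelfand--Naimark--Segal construction directly on the algebra $A$ itself, exploiting the fact that the target object is only required to be a \emph{semi}-Hilbert module: because the inner product is permitted to be degenerate, none of the quotient-by-null-vectors or metric completion steps of the classical complex GNS construction are needed, and the entire representation can be built on $A$ without modification. Concretely, I would set $E^{\varphi} := A$, regarded as a module over $R$ via its bimodule structure (so that it is simultaneously a left and a right $R$-module, as required for $\mathrm{End}(E^{\varphi})$ to be an algebra over $R$), equip it with the form $\langle \alpha' | \alpha \rangle^{\varphi} := \varphi((\alpha')^{\ast}\alpha)$, take the cyclic vector to be the unit $e^{\varphi} := \epsilon$, and define the representation as left multiplication $\pi^{\varphi}(\alpha) := \alpha(\cdot)$.

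The verifications then split into three blocks. First I would check that $\langle \cdot | \cdot \rangle^{\varphi}$ is a positive semidefinite sesquilinear form: right $R$-linearity in the second slot reduces to linearity of $\varphi$ together with the right-scalar compatibility $\varphi(\beta r) = \varphi(\beta) r$; conjugate symmetry follows from the state condition $\varphi(a^{\ast}) = \overline{\varphi(a)}$ combined with the contravariance identity $(\alpha^{\ast}\alpha')^{\ast} = (\alpha')^{\ast}\alpha$; and positivity $\langle \alpha | \alpha \rangle^{\varphi} = \varphi(\alpha^{\ast}\alpha) \in R_{+}$ is precisely the defining positivity of a state. Second, I would verify that $\pi^{\varphi}$ is a homomorphism of algebras over $R$: each $\pi^{\varphi}(\alpha)$ is a right-$R$-module endomorphism by associativity and bimodule compatibility, while $\pi^{\varphi}(\alpha\beta) = \pi^{\varphi}(\alpha)\pi^{\varphi}(\beta)$ and $\pi^{\varphi}(\epsilon) = \mathrm{id}$ hold by associativity and unitality, with additivity and scalar compatibility immediate.

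Third, I would establish the two displayed identities. Since $\epsilon^{\ast} = \epsilon$ (the involution fixes the unit, by uniqueness of the unit applied to $\epsilon^{\ast}$), unitality of $\varphi$ gives $\langle e^{\varphi} | e^{\varphi} \rangle^{\varphi} = \varphi(\epsilon) = 1$; the reproducing identity is the one-line computation $\langle e^{\varphi} | \pi^{\varphi}(\alpha) e^{\varphi} \rangle^{\varphi} = \varphi(\epsilon^{\ast}\alpha\epsilon) = \varphi(\alpha)$; and the adjoint relation follows from $\langle v' | \pi^{\varphi}(\alpha) v \rangle^{\varphi} = \varphi((v')^{\ast}\alpha v) = \varphi((\alpha^{\ast}v')^{\ast}v) = \langle \pi^{\varphi}(\alpha^{\ast}) v' | v \rangle^{\varphi}$, again using contravariance of $\ast$ and associativity. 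No single step is difficult, and the point I would stress is conceptual rather than computational: recognizing that working with semi-Hilbert modules over a rig lets one dispense entirely with the Gelfand ideal and the completion is exactly what makes the construction valid when $R$ need have neither subtraction nor limits. The one place demanding genuine care is the bookkeeping of left versus right actions and the contravariance of the involution inside $\varphi((\cdot)^{\ast}(\cdot))$, so I would fix the bimodule and $^{\ast}$-algebra conventions explicitly before beginning the computations.
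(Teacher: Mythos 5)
Your proposal is correct and takes essentially the same route as the paper's own proof: both set $E^{\varphi}=A$ with the form $\langle \alpha'|\alpha\rangle^{\varphi}=\varphi((\alpha')^{\ast}\alpha)$, take $e^{\varphi}=\epsilon$, and let $\pi^{\varphi}$ be left multiplication, with no quotient or completion. The only difference is that you explicitly carry out the verifications (sesquilinearity, conjugate symmetry via $(\alpha^{\ast}\alpha')^{\ast}=(\alpha')^{\ast}\alpha$, the homomorphism property, and $\epsilon^{\ast}=\epsilon$) that the paper dismisses as ``easy to show,'' and these checks are all sound.
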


\begin{remark}
When the rig $R$ is actually a ring, we can construct $^{\ast}$-representation of $A$ as follows (This idea is due to Malte Gerhold): We call an endomorphism $T$ on a semi-Hilbert module $E$ adjointable if there is a (not necessarily unique) adjoint, i.e., an endomorphism $T^{\ast}$ with $\langle v'|Tv \rangle=\langle T^{\ast}v'|v\rangle$ for any $v,v' \in E$. When $E$ is also a left $R$ module, the set of adjointable endomorphisms $\mathrm{Adj}(E)$ becomes a subalgebra over $R$ of $\mathrm{End}(E)$. The set $\mathrm{Nul}(E)=\{T|\langle v'|Tv\rangle =0, \forall v,v'\in E\}$ becomes a two-sided ideal in $\mathrm{Adj}(E)$. When $R$ is a ring, the quotient of $\mathrm{Adj}(E)$ by $\mathrm{Nul}(E)$ becomes a $^{\ast}$-algebra and we can construct the $^{\ast}$-representation of $A$, since we can show that the two "adjoints" of an endomorphism coincide up to some element of $\mathrm{Nul}(E)$ by taking subtraction of endomophisms and can define the "taking adjoint" as involution operation in the quotient. In more general cases (especially for the rigs such that the cancellation law for addition does not hold), the GNS construction might not necessarily lead to a $^{\ast}$-representations by adjointable endomorphisms.



\end{remark}

When $A$ is a $^{\ast}$-algebra over $\mathbb{C}$, we can prove Cauchy-Schwarz inequality for semi-Hilbert space. Then the set $N^{\varphi}=\{\alpha \in A|\langle \alpha | \alpha \rangle^{\varphi}=0 \}$ becomes a subspace of $A$. By taking the quotient $E^{\varphi}=A/N^{\varphi}$, which becomes a pre-Hilbert space, we obtain the following "GNS(Gelfand-Naimark-Segal)" representation of $A$.

\begin{theorem}[GNS Representation]
Let $A$ be a $^{\ast}$-algebra over $\mathbb{C}$. For any state $\varphi$ on $A$ with respect to $(\mathbb{C},\mathbb{R}_{\geq 0})$, there exist a pre-Hilbert space $E^{\varphi}$ over $\mathbb{C}$ equipped with an inner product $\langle \cdot | \cdot \rangle^{\varphi}$, an element $e^{\varphi} \in E^{\varphi}$ such that $\langle e^{\varphi}| e^{\varphi} \rangle^{\varphi}=1$, and a homomorphism $\pi^{\varphi}: A \longrightarrow \mathrm{End} (E^{\varphi})$ between 
 algebras over $R$ such that
\[
\varphi(\alpha)=\langle e^{\varphi}| \pi^{\varphi}(\alpha) e^{\varphi} \rangle^{\varphi}
\]
and
\[
\langle v'|\pi^{\varphi}(\alpha)v\rangle^{\varphi}=\langle \pi^{\varphi}(\alpha^{\ast})v'|v\rangle^{\varphi}
\]
hold for any $\alpha \in A$ and $v,v'\in E^{\varphi}$.
\end{theorem}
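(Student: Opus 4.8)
The plan is to build directly on the already-established Generalized GNS Representation, in which the semi-Hilbert module is $A$ itself equipped with the positive semidefinite form $\langle \alpha' | \alpha \rangle^{\varphi}=\varphi((\alpha')^{\ast}\alpha)$, the cyclic vector is the class of the unit, and $\pi^{\varphi}$ is left multiplication. Everything required by the present statement is already available at that level \emph{except} definiteness of the form, i.e.\ the upgrade from a semi-Hilbert module to a genuine pre-Hilbert space. So the single new task is to divide out the null vectors, and the only genuinely new ingredient — the step for which the hypothesis $R=\mathbb{C}$ is essential — is the Cauchy–Schwarz inequality, which is precisely what is unavailable over a general rig.

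First I would prove Cauchy–Schwarz for $\langle \cdot | \cdot \rangle^{\varphi}$: given $\alpha,\beta\in A$, consider $0\leq \langle \alpha-\lambda\beta \,|\, \alpha-\lambda\beta\rangle^{\varphi}$ for $\lambda\in\mathbb{C}$, expand using sesquilinearity and the Hermitian symmetry $\langle \alpha|\beta\rangle^{\varphi}=\overline{\langle \beta|\alpha\rangle^{\varphi}}$, and optimize over $\lambda$ to obtain $|\langle \beta|\alpha\rangle^{\varphi}|^{2}\leq \langle \beta|\beta\rangle^{\varphi}\,\langle \alpha|\alpha\rangle^{\varphi}$. This is where the field structure of $\mathbb{C}$ (division and the ordering of $\mathbb{R}$) enters, and I expect this to be the main obstacle — not because the computation is hard, but because it is the one place the argument cannot be carried out for an arbitrary positivity structure, which is why the theorem is stated separately from the rig-level version.

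Using Cauchy–Schwarz I would then identify $N^{\varphi}=\{\alpha\in A \mid \langle \alpha|\alpha\rangle^{\varphi}=0\}$ with the radical $\{\alpha \mid \langle \beta|\alpha\rangle^{\varphi}=0 \text{ for all }\beta\}$; this identification makes it immediate that $N^{\varphi}$ is a linear subspace (the subtle closure-under-addition that fails over general rigs is exactly what Cauchy–Schwarz repairs here). I would further check that $N^{\varphi}$ is a left ideal: for $\alpha\in N^{\varphi}$ and $\beta,\gamma\in A$, Cauchy–Schwarz gives $|\langle \gamma|\beta\alpha\rangle^{\varphi}|^{2}=|\langle \beta^{\ast}\gamma|\alpha\rangle^{\varphi}|^{2}\leq \langle \beta^{\ast}\gamma|\beta^{\ast}\gamma\rangle^{\varphi}\langle \alpha|\alpha\rangle^{\varphi}=0$, whence $\beta\alpha\in N^{\varphi}$.

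Finally I would set $E^{\varphi}=A/N^{\varphi}$. Because $N^{\varphi}$ is the radical, $\langle \cdot|\cdot\rangle^{\varphi}$ descends to a well-defined \emph{positive definite} form on $E^{\varphi}$, making it a pre-Hilbert space over $\mathbb{C}$; because $N^{\varphi}$ is a left ideal, $\pi^{\varphi}(\alpha)\colon \beta\mapsto \alpha\beta$ descends to a well-defined endomorphism, and $\alpha\mapsto \pi^{\varphi}(\alpha)$ remains an algebra homomorphism $A\to \mathrm{End}(E^{\varphi})$. Taking $e^{\varphi}$ to be the class of the unit $\epsilon$, unitality of $\varphi$ gives $\langle e^{\varphi}|e^{\varphi}\rangle^{\varphi}=\varphi(\epsilon)=1$, and the two displayed identities descend verbatim from the Generalized GNS Representation, since $\varphi(\alpha)=\varphi(\epsilon^{\ast}\alpha\epsilon)$ and $\varphi((\beta')^{\ast}\alpha\beta)=\varphi((\alpha^{\ast}\beta')^{\ast}\beta)$ are identities already valid in $A$ and are unaffected by passing to the quotient. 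Thus only the passage through Cauchy–Schwarz is new; the rest is transport of the rig-level result across the quotient map.
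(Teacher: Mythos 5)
Your proposal is correct and follows essentially the same route as the paper: the paper likewise takes the Generalized GNS Representation (left multiplication on $A$ with $\langle \alpha' | \alpha \rangle^{\varphi}=\varphi((\alpha')^{\ast}\alpha)$ and $e^{\varphi}=\epsilon$), invokes the Cauchy--Schwarz inequality available over $\mathbb{C}$ to conclude that $N^{\varphi}=\{\alpha \in A \mid \langle \alpha | \alpha \rangle^{\varphi}=0\}$ is a subspace, and passes to the quotient $E^{\varphi}=A/N^{\varphi}$ as a pre-Hilbert space. Your write-up in fact supplies details the paper leaves implicit (the identification of $N^{\varphi}$ with the radical and the left-ideal property needed for $\pi^{\varphi}$ to descend to the quotient), but the underlying argument is the same.
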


By taking completion we have usual Hilbert space formulation popular in the context of quantum mechanics.

\begin{remark}
If the state $\varphi$ is fixed as "standard" one, such as "vacuum", the Dirac bra/ket notation becomes valid if we interpret as follows:

$|\alpha\rangle =\pi^{\varphi}(\alpha)$, 
$\langle \alpha|=\varphi(\alpha^{\ast}(\cdot))$, 
$\langle \alpha'|\alpha\rangle=\varphi(a^{\ast} b)$, 
$|0 \rangle =|\epsilon \rangle$ (vacuum).
\end{remark}

As colloraries of theorems above, we have the following results, which are extensions of the Theorem 1 in \cite{CIM}. :

\begin{theorem}[Generalized GNS Representation of $^{\dagger}$-Category]
Let $\mathcal{C}$ be a $^{\dagger}$-category and $R$ be a $^{\ast}$-rig. For any $\varphi$ be a state on $\mathcal{C}$ with respect to $(R,R_{+})$, there exist a semi-Hilbert module $E^{\varphi}$ over $R$ which is also a left $R$ module equipped with a sesquilinear form $\langle \cdot | \cdot \rangle^{\varphi}$, an element $e^{\varphi} \in E^{\varphi}$ such that $\langle e^{\varphi}| e^{\varphi} \rangle^{\varphi}=1$, and a homomorphism $\pi^{\varphi}:\; ^{0}R_{0}[\mathcal{C}] \longrightarrow \mathrm{End} (E^{\varphi})$ between 
 algebras over $R$ such that
\[
\varphi(\alpha)=\langle e^{\varphi}| \pi^{\varphi}(\alpha) e^{\varphi} \rangle^{\varphi}
\]
and
\[
\langle v'|\pi^{\varphi}(\alpha)v\rangle^{\varphi}=\langle \pi^{\varphi}(\alpha^{\ast})v'|v\rangle^{\varphi}
\]
hold for any $\alpha \in A$ and $v,v'\in E^{\varphi}$.
\end{theorem}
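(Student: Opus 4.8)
The plan is to derive this statement as an immediate corollary of the two preceding theorems: the theorem identifying $^{0}R_{0}[\mathcal{C}]$ as an algebra with involution and the Generalized GNS Representation theorem for an arbitrary $^{\ast}$-algebra. The essential observation is that once the category algebra has been recognized as a $^{\ast}$-algebra, a state on the category $\mathcal{C}$ is \emph{by definition} nothing but a state on that $^{\ast}$-algebra, so the general construction applies verbatim with $A := {}^{0}R_{0}[\mathcal{C}]$.

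First I would unpack the hypotheses. Since $\mathcal{C}$ is a $^{\dagger}$-category it carries a contravariant involution $(\cdot)^{\dagger}$ which is the identity on objects, and since $R$ is a $^{\ast}$-rig it carries a contravariant involution $\overline{(\cdot)}$. By the theorem ``Category Algebra as Algebra with Involution,'' the operation $\alpha^{\ast}(c)=\overline{\alpha(c^{\dagger})}$ is then a contravariant involution making $A={}^{0}R_{0}[\mathcal{C}]$ a $^{\ast}$-algebra over $R$. This is the only place where the $^{\dagger}$-structure on $\mathcal{C}$ and the $^{\ast}$-structure on $R$ enter the argument.

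Next, because a state on the category $\mathcal{C}$ with respect to $(R,R_{+})$ was defined precisely as a state on the $^{\ast}$-algebra $^{0}R_{0}[\mathcal{C}]$ with respect to $(R,R_{+})$, the functional $\varphi$ satisfies exactly the hypotheses of the Generalized GNS Representation theorem applied to $A={}^{0}R_{0}[\mathcal{C}]$. Invoking that theorem produces the semi-Hilbert module $E^{\varphi}$ (namely $A$ itself as a right $R$-module, equipped with $\langle \alpha' | \alpha \rangle^{\varphi}=\varphi((\alpha')^{\ast}\alpha)$), the distinguished element $e^{\varphi}=\epsilon$ with $\langle e^{\varphi}|e^{\varphi}\rangle^{\varphi}=1$, and the homomorphism $\pi^{\varphi}(\alpha)=\alpha(\cdot)$ given by left multiplication, satisfying both the reproducing identity $\varphi(\alpha)=\langle e^{\varphi}|\pi^{\varphi}(\alpha)e^{\varphi}\rangle^{\varphi}$ and the symmetry relation $\langle v'|\pi^{\varphi}(\alpha)v\rangle^{\varphi}=\langle \pi^{\varphi}(\alpha^{\ast})v'|v\rangle^{\varphi}$.

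Since every conclusion is inherited directly from the general theorem, there is no substantive obstacle here; the whole content is the verification that the hypotheses transfer. The only point requiring any attention is confirming that the involution furnished by the $^{\dagger}$-category theorem coincides with the involution implicitly used in the definition of a state on $\mathcal{C}$ — but this holds by construction, precisely because ``state on a category'' was defined in terms of the $^{\ast}$-algebra structure on $^{0}R_{0}[\mathcal{C}]$. Thus the argument is a matter of chaining the two prior results rather than of fresh computation.
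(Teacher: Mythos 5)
Your proposal is correct and matches the paper's own treatment: the paper states this result as an immediate corollary of the ``Category Algebra as Algebra with Involution'' theorem and the Generalized GNS Representation theorem, exactly the chaining you describe, with the definitional identification of a state on $\mathcal{C}$ with a state on $^{0}R_{0}[\mathcal{C}]$ doing the work. Nothing further is needed.
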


\begin{theorem}[GNS Representation of $^{\dagger}$-Category]
Let $\mathcal{C}$ be a $^{\dagger}$-category. For any $\varphi$ be a state on $\mathcal{C}$ with respect to $(\mathbb{C},\mathbb{R}_{\geq 0})$, there exist a pre-Hilbert space $E^{\varphi}$ over $\mathbb{C}$ equipped with an inner product $\langle \cdot | \cdot \rangle^{\varphi}$, an element $e^{\varphi} \in E^{\varphi}$ such that $\langle e^{\varphi}| e^{\varphi} \rangle^{\varphi}=1$, and a homomorphism $\pi^{\varphi}:\; ^{0}R_{0}[\mathcal{C}] \longrightarrow \mathrm{End} (E^{\varphi})$ between 
 algebras over $\mathbb{C}$ such that
\[
\varphi(\alpha)=\langle e^{\varphi}| \pi^{\varphi}(\alpha) e^{\varphi} \rangle^{\varphi}
\]
and
\[
\langle v'|\pi^{\varphi}(\alpha)v\rangle^{\varphi}=\langle \pi^{\varphi}(\alpha^{\ast})v'|v\rangle^{\varphi}
\]
hold for any $\alpha \in A$ and $v,v'\in E^{\varphi}$.
\end{theorem}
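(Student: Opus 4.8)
The plan is to treat this statement as a direct corollary of the \emph{GNS Representation} theorem for ${}^{\ast}$-algebras over $\mathbb{C}$, applied to the particular ${}^{\ast}$-algebra furnished by the category algebra construction. No new analysis is required beyond identifying the right inputs.

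First I would equip $\mathbb{C}$ with complex conjugation as its contravariant involution, so that $\mathbb{C}$ is a ${}^{\ast}$-rig, and recall that a $^{\dagger}$-category carries by definition a contravariant involution $(\cdot)^{\dagger}$ that is the identity on objects. The \emph{Category Algebra as Algebra with Involution} theorem then guarantees that $A := {}^{0}\mathbb{C}_{0}[\mathcal{C}]$, equipped with $\alpha^{\ast}(c) = \overline{\alpha(c^{\dagger})}$, is a ${}^{\ast}$-algebra over $\mathbb{C}$. Next I would observe that, by the very definition of \emph{State on Category}, a state $\varphi$ on $\mathcal{C}$ with respect to $(\mathbb{C},\mathbb{R}_{\geq 0})$ is precisely a state on the ${}^{\ast}$-algebra $A$ with respect to the positivity structure $(\mathbb{C},\mathbb{R}_{\geq 0})$; the state axioms transport verbatim, with nothing further to check.

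Finally I would invoke the \emph{GNS Representation} theorem on the pair $(A,\varphi)$. This immediately supplies the pre-Hilbert space $E^{\varphi} = A/N^{\varphi}$ with $N^{\varphi} = \{\alpha \mid \langle \alpha | \alpha \rangle^{\varphi} = 0\}$, the distinguished vector $e^{\varphi}$ given by the class of the unit $\epsilon$ (so that $\langle e^{\varphi} | e^{\varphi} \rangle^{\varphi} = \varphi(\epsilon) = 1$), and the homomorphism $\pi^{\varphi}$ induced by left multiplication, together with the two displayed identities $\varphi(\alpha) = \langle e^{\varphi} | \pi^{\varphi}(\alpha) e^{\varphi} \rangle^{\varphi}$ and $\langle v' | \pi^{\varphi}(\alpha) v \rangle^{\varphi} = \langle \pi^{\varphi}(\alpha^{\ast}) v' | v \rangle^{\varphi}$. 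Substituting $A = {}^{0}\mathbb{C}_{0}[\mathcal{C}]$ yields exactly the asserted statement.

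Since the construction is a pure specialization, there is essentially no new obstacle; the only points deserving care are bookkeeping ones. One should confirm that the involution on $A$ implicit in the state condition $\varphi(\alpha^{\ast}) = \overline{\varphi(\alpha)}$ is the one induced jointly by $(\cdot)^{\dagger}$ and complex conjugation, and that the positivity structure $(\mathbb{C},\mathbb{R}_{\geq 0})$ is the same in both theorems. The genuinely substantial ingredients, namely the Cauchy--Schwarz inequality that makes $N^{\varphi}$ a subspace and the fact that $N^{\varphi}$ is a left ideal so that left multiplication descends to a well-defined endomorphism of the quotient, are already absorbed into the \emph{GNS Representation} theorem being cited, and so need not be reproved here.
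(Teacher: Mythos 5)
Your proposal is correct and follows essentially the same route as the paper: the paper presents this theorem precisely as a corollary of its GNS Representation theorem for ${}^{\ast}$-algebras over $\mathbb{C}$, applied to the ${}^{\ast}$-algebra ${}^{0}\mathbb{C}_{0}[\mathcal{C}]$ supplied by the Category Algebra as Algebra with Involution theorem, with a state on $\mathcal{C}$ being by definition a state on that algebra. The bookkeeping points you flag (which involution, which positivity structure, and the Cauchy--Schwarz/quotient facts absorbed into the cited theorem) are exactly the ones the paper leaves implicit, so nothing is missing.
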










\section*{Acknowledgements}

The author is grateful to Prof. Hiroshi Ando, Dr. Soichiro Fujii and Ms. Misa Saigo for fruitful discussions and comments. This work  was partially supported by Research Origin for Dressed
Photon, JSPS KAKENHI (grant number 19K03608 and 20H00001) and JST CREST (JPMJCR17N2).

\end{document}